\pdfoutput=1
\documentclass[russian,english]{article}
\usepackage[T1]{fontenc}
\usepackage[cp1251]{inputenc}
\usepackage{textcomp}
\usepackage{ifsym}
\usepackage{amsmath}
\usepackage{amssymb}
\usepackage{amsthm}

\usepackage{babel}
\usepackage{graphicx}
\author{Alexandra Skripchenko\footnote{This work is supported in part by RFBR (grant no. 10-01-91056-{\selectlanguage{russian}НЦНИ\_а})} \\ \selectlanguage {english} Moscow State University}
\title{Symmetric interval identification systems of order three}

\begin{document}
\maketitle
\begin{abstract}
In the present paper we study symmetric interval identification systems of order three. We prove that the Rauzy induction preserves symmetry: for any symmetric interval identification system of order 3 after finitely many iterations of the Rauzy induction we always obtain a symmetric system. We also provide an example of symmetric interval identification system of thin type. 
\end{abstract}

The notion of interval identification system was introduced by I. A. Dynnikov
and B. Wiest in \cite{2} and studied then by I. A. Dynnikov in \cite{1}. It is a generalization of interval exchange transformations and interval translation mappings. 

\theoremstyle{definition}
\newtheorem{definition}{Definition}
\begin{definition}
An \emph {oriented interval identification system} is an object that consists of:
\begin{enumerate}
\item An interval $\left[A,B\right]$ (we call this
interval \emph{the support interval}); 
\item A natural number $n$ (we call this number \emph{the order} of the system);
\item A collection of $n$ unordered pairs $\left\{ \left[a_{i},b_{i}\right],\left[c_{i},d_{i}\right]\right\} $
of subintervals of $\left[A,B\right]$ in each of which the intervals
have equal lengths: $b_{i}-a_{i}=d_{i}-c_{i}$.
\end{enumerate}
\end{definition}
For every pair of intervals $\left\{ \left[a_{i},b_{i}\right],\left[c_{i},d_{i}\right]\right\} $
from an interval identification system we consider the orientation preserving affine isometry between them and we will say that $x$ is identified to $y$ (and write $x\leftrightarrow_{i}y$)
if $x$ is mapped to $y$ or $y$ is mapped to $x$ under this isometry.
So we write $x$$\leftrightarrow_{i}$$y$ if there exists $t\in\left[0,1\right]$
such that $\left\{ x,y\right\} =\left\{ a_{i}+t\left(b_{i}-a_{i}\right),c_{i}+t\left(d_{i}-c_{i}\right)\right\} $. 
A more general object, interval identification system, in which some pairs of intervals are identified by orientation reversing maps is not considered in this paper. All interval identification systems in this paper are assumed to be oriented. 

Objects similar to interval identification systems have appeared in the theory of {$\mathbb R$}-trees (sometimes without giving them specific name) as an instrument for describing the leaf space of a band complex (see \cite{3} and \cite{7} for details). 

\begin{definition}
An interval identification system is called \emph{balanced}, if
$A=\min_{i}(a_{i}), B=\max_{i}(b_{i})$ and ${\textstyle \sum_{i=1}^{n}\left(b_{i}-a_{i}\right)}=B-A$.
\end{definition}

\begin{definition}
An interval identification system is called \emph{symmetric} if $a_{i}-A$$=$ $B$$-d_{i}$ for each of
$i=1,\ldots,n$. 
\end{definition}

The motivation for studying balanced interval identification systems of order three comes from the three-dimensional topology and is connected with Novikov's problem \cite{5} of asymptotic behavior of plane sections of three-periodic surfaces. More precisely, interval identification systems are used in \cite{1} to construct 3-periodic surfaces in the three-space whose intersections with plane of a fixed direction has a chaotic behavior. Novikov's problem originates from the conductivity theory for monocrystals immersed in a magnetic field. Symmetric interval identification systems correspond to Fermi surfaces that are invariant under a central symmetry, which is always the case in all physically meaningful examples.

With each interval identification system
$$S=\left(\left[A,B\right];\left[a_{1},b_{1}\right]\leftrightarrow\left[c_{1},d_{1}\right];\left[a_{2},b_{2}\right]\leftrightarrow\left[c_{2},d_{2}\right];\left[a_{3},b_{3}\right]\leftrightarrow\left[c_{3},d_{3}\right]\right)$$
we associate a graph $\Gamma\left(S\right)$ whose vertices are all points of the support interval, and two vertices of the graph are connected by an edge if and only if these two
points are identified by our system in the sense that is described
above. The system $S$ determines an equivalence relation
$\sim$ on the support interval $\left[A,B\right]$: the points lying
on the same connected component of the graph $\Gamma\left(S\right)$ are
said to be \emph{equivalent}. The set of points equivalent in this sense
to $x$ is called \emph{the orbit} of $x$ in S. The connected component of our graph that contains a vertex $x\in [A,B]$ will be denoted by $\Gamma_{x}(S)$.

We are interested in the properties of orbits of an interval identification
systems such as finiteness and being everywhere dense. For studying them a special Euclid type algorithm is used. The analog of this process that appears in the theory of interval exchange transformation is called the Rauzy induction. This process can also be considered as a particular
case of the Rips machine algorithm for band complexes in the theory of
{$\mathbb R$}-trees (see \cite{4} and \cite{7} for details). The main idea is that from any interval identification system one constructs a sequense of interval identifiacation systems equivalent in a certain sense to the original one (see the precise definition below) but with a smaller support. Combinatorial properties of this sequence are responsible for "ergodic" properties of the original interval identification system. 

\begin{definition}
Two interval identification systems $S_{1}$ and $S_{2}$ with supports $[A_{1},B_{1}]$ and $[A_{2},B_{2}]$, respectively, are called \emph{equivalent}, if there is a real number $t \in {\mathbb R}$ and an interval $[A,B] \subset [A_{1},B_{1}] \cap  [A_{2}+t,B_{2}+t]$ such that
\begin {enumerate}
\item every orbit of each of the systems $S_{1}$ and $S_{2}+t$ contains a point lying in $[A,B]$
\item for each point $x\in [A,B]$ the graphs $\Gamma_{x}(S_{1})$ and $\Gamma_{x}(S_{2}+t)$ are homotopy equivalent through mappings that are identical on $[A,B]$ and such that the full preimage of each vertex contains only finitely many vertices of the other graph.
\end {enumerate}
\end{definition}

\noindent It is easy to see that it is an equivalence relation. 

\emph{The Rauzy induction} for an interval identification system is a recursive application of admissible transmissions followed by reductions as described below. 

\begin{definition}
Let $$S=\left(\left[A,B\right];\left[a_{1},b_{1}\right]\leftrightarrow\left[c_{1},d_{1}\right];\left[a_{2},b_{2}\right]\leftrightarrow\left[c_{2},d_{2}\right];\left[a_{3},b_{3}\right]\leftrightarrow\left[c_{3},d_{3}\right]\right)
$$
be an interval identification system and let one of the subintervals,
$\left[c_{1},d_{1}\right]$, say, be contained in another one $\left[c_{2},d_{2}\right]$, say. Let $S^{'}$ be the interval
identification system obtained from $S$ by replacing the pair $\left[a_{1},b_{1}\right]\leftrightarrow\left[c_{1},d_{1}\right]$
with $\left[a_{1},b_{1}\right]\leftrightarrow\left[c_{1}^{'},d_{1}^{'}\right]$
where $\left[c_{1}^{'},d_{1}^{'}\right]$=$\left[c_{1},d_{1}\right]-c_{2}$+$a_{2}$$\subset$$\left[a_{2},b_{2}\right]$.
We say that $S^{'}$is obtained from $S$ by a \emph{transmission} (of $[c_{1},d_{1}]$ along $[a_{2},b_{2}]\leftrightarrow [c_{2},d_{2}]$).

If, in addition, we have $c_{2}=A$, then this operation is called
an \emph{admissible transmission on the left, }and if $d_{2}=B$,
an \emph{admissible transmission on the right}. 
\end {definition}

\begin{definition}
Let 
$$
S=\left(\left[A,B\right];\left[a_{1},b_{1}\right]\leftrightarrow\left[c_{1},d_{1}\right];\left[a_{2},b_{2}\right]\leftrightarrow\left[c_{2},d_{2}\right];\left[a_{3},b_{3}\right]\leftrightarrow\left[c_{3},d_{3}\right]\right)
$$
be an interval identification system and let $d_{1}=B$. We call all
endpoints of our subintervals \emph{critical points}. Assume that
the point $B$ is not covered by any interval from $S$ except $d_{1}$
and that the interior of the interval $\left[c_{1},d_{1}\right]$ contains
a critical point. Let $u$ the rughtmost such point. Then the interval $[u,B]$ is covered by only one interval from our system. Replacing the pair $\left[a_{1},b_{1}\right]\leftrightarrow\left[c_{1},d_{1}\right]$
with $\left[a_{1},b_{1}-d_{1}+u\right]\leftrightarrow$$\left[c_{1},u\right]$
in $S$ with simultaneous cutting off the part $[u,B]$ from the support interval will be called a\emph{ reduction on the right} (of the pair $\left[a_{1},b_{1}\right]\leftrightarrow\left[c_{1},d_{1}\right]$). A reduction
on the left is defined in the symmetric way. 
\end{definition}

An example of an iteration (transmission on the right plus reduction on the right) of the Rauzy induction to a symmetric interval identification system is shown in Figure \ref{void}. 
\begin{figure}
\includegraphics[width=8cm,height=8cm]{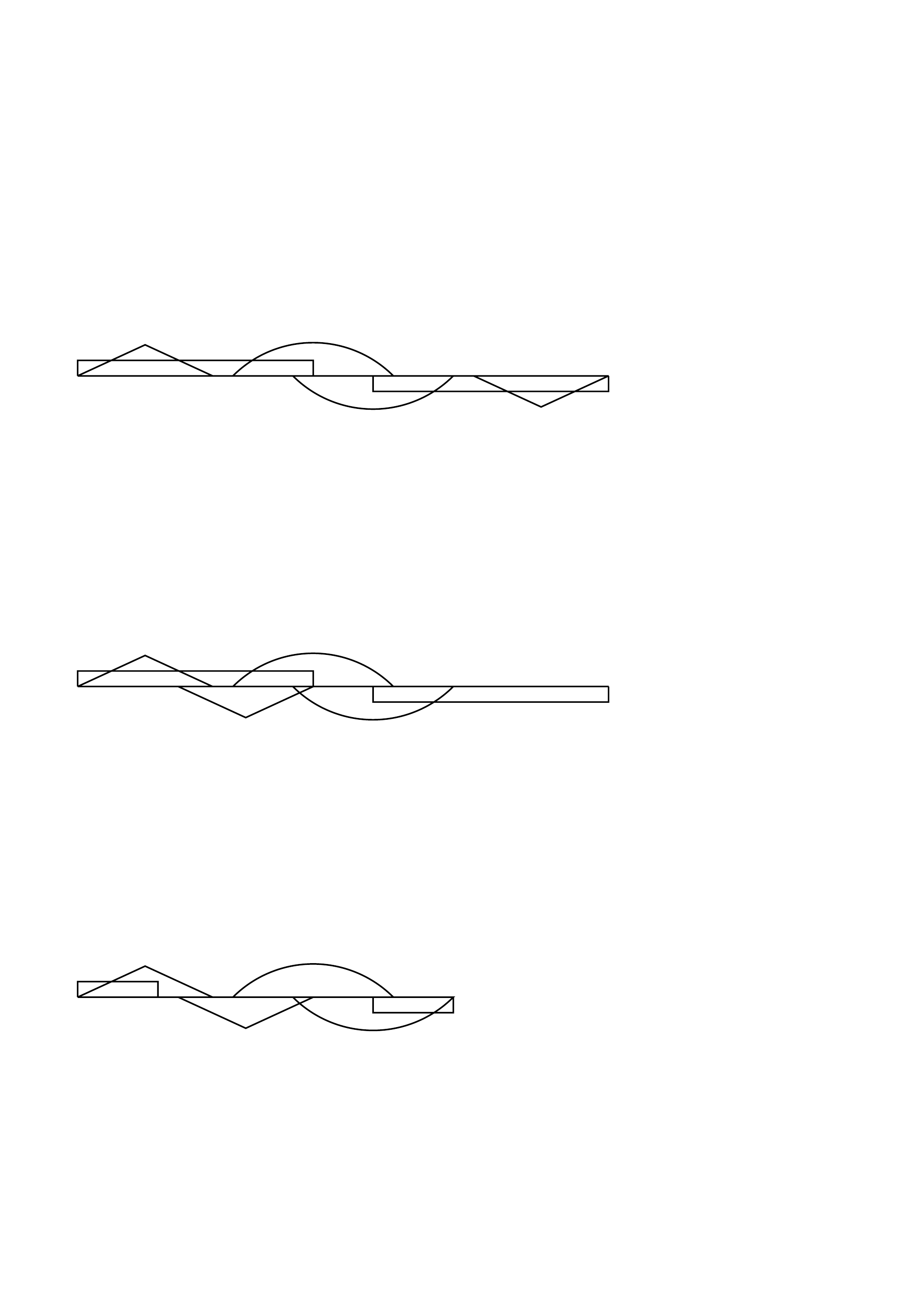}
\put(-225,155){$a_{1}=a_{2}$}
\put(-180,149){$b_{2}$}
\put(-177,154){\vector(0,2){8}}
\put(-152,149){$b_{1}$}
\put(-151,154){\vector(0,2){8}}
\put(-139,171){{$c_{1}$}}
\put(-136,170){\vector(0,-3){8}}
\put(-111,171){$c_{2}$}
\put(-110,170){\vector(0,-3){8}}
\put(-93,163){$d_{1}=d_{2}$}
\put(-140,145){\vector(0,-10){20}}
\put(-135,135){transmission of $[c_{2},d_{2}]$ on the right}
\put(-140,85){\vector(0,-10){20}}
\put(-135,75){reduction of $[c_{1},d_{1}]$ on the right}
\caption{An iteration of the Rauzy induction}
\label{void}
\end{figure}

We say that an interval identification system has a \emph{hole}
if there are some points in the support interval that are not covered by
a interval from S. This means in particular that our system has points
with finite orbits. 

The Rauzy induction stops once a system with a hole obrained. One can show that transmissions and reductions turn
an interval identification system into an equivalent one. We are interested in describing symmetric interval identification systems that are not equivalent to a system with a hole. By applying one reduction on either
side to such a system we always obtain a system with $a_{1}=a_{2}=A$ and $d_{1}=d_{2}=B$, up to renumeration of the intervals. We call systems satisfying this condition \emph{special}. If, in addition, we fix
the interval $\left[A,B\right]$, then a special symmetric interval
identification system is left three degrees of freedom, and has the form (we assume $A$=0 for simplicity):
\begin{equation}
\begin{split}
\label{100}
S=([0,a+b+c];& [0,a]\leftrightarrow[b+c,a+b+c],\\ 
					& [0,b]\leftrightarrow[a+c,a+b+c], \\
					& [u,u+c]\leftrightarrow[a+b-u,a+b+c-u])
\end{split}
\end{equation}
with $a,b,c,u>0$, $a+b+c=B-A$.

We are interested in the most generic case of symmetric interval identification system in the sense that no integral linear relation holds for the parameters $a,b,c,u$ except those that must hold by definition.

We define a \emph{generalized iteration} of the Rauzy induction by analogy with a step of the fast version of Euclid's algorithm, which involves the division with remainder instead of subtraction of the smaller number from the larger. It may happen that only one of the three pairs of intervals is subject to reduction in several consecutive steps of the Rauzy induction (and the intervals from the second and the third pair are involved only in transmissions). In this case we consider the result of such a sequence of the Rauzy induction iterations as the result of applying of one generalized iteration. An example is shown in Figure 6.

For a symmetric interval identification
system of order three one step of a one-side
Rauzy induction (for example, admissible transmission on the right
followed by reduction on the right) doesn't preserve the symmetry. However, we have the following.
\theoremstyle{theorem}
\newtheorem{theorem}{Theorem}
\begin{theorem} For any special symmetric balanced interval identification
system of order three without hole $S$, at most three generalized iteration of a one side Rauzy induction is needed to obtain a new special symmetric balanced system or a system with a hole.
\end{theorem}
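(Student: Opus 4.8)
The plan is to carry out an explicit case analysis on the combinatorics of the Rauzy induction applied to the special symmetric system \eqref{100}. First I would set up coordinates: a special symmetric balanced system is determined by the triple $(a,b,c)$ together with the position parameter $u$ of the third pair, with $a+b+c = B-A$ and $0 < u < a+b$ (so that the third pair actually fits inside the support). Fix, say, the right side and start running the one-sided Rauzy induction. The first transmission on the right must move whichever of the intervals $[b+c,a+b+c]$, $[a+c,a+b+c]$, $[a+b-u,a+b+c-u]$ fails to reach $B$ into the one that does; since the system is special, $[0,a]\leftrightarrow[b+c,a+b+c]$ and $[0,b]\leftrightarrow[a+c,a+b+c]$ both have $d_i = B$, so the generic first move is a transmission of the shorter of these two along the longer, and the induction then reduces on the right. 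The key computational step is to track how the four parameters transform under one generalized iteration: because a generalized iteration packages a whole run of ``divide with remainder'' of one length against another, the new parameters are obtained from the old by a matrix in $GL(4,\mathbb Z)$ whose entries encode the quotients, and I would write this matrix down explicitly in each combinatorial branch (which of the three intervals is currently the ``reduced'' one, and whether the transmitted interval sticks out on the left or the right of the target).

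Second, I would exploit symmetry to cut the bookkeeping in half. Although a single one-sided step breaks symmetry (as the paper already notes), the symmetry defect is controlled: after a transmission-plus-reduction on the right, the quantities $a_i - A$ and $B - d_i$ differ by an amount equal to the length that was shaved off, uniformly across the pairs that were only transmitted, so the system becomes ``symmetric up to a global shift of the third interval's position.'' I would introduce a bounded integer invariant — essentially the number of pairs whose left endpoint has been displaced relative to where symmetry would put it — and show this invariant lies in $\{0,1,2,3\}$ and strictly governs how far we are from a symmetric system. The heart of the argument is then: starting from a special symmetric system (invariant $= 0$), each generalized iteration changes this invariant in a predictable way, and after at most three iterations it returns to $0$, i.e. we land on a special symmetric system again — unless along the way one of the intervals shrinks to the point where $B$ ceases to be covered, which is exactly the ``system with a hole'' alternative.

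Third, I would organize the branches. After reduction on the right and renumbering, there are only a few combinatorial types for the resulting system depending on the cyclic order of the new endpoints; I expect essentially three generic types (matching the ``at most three'' in the statement), and in each type I would verify by direct substitution into the form \eqref{100} that one further generalized iteration either restores speciality-plus-symmetry or produces a hole (the genericity hypothesis — no unexpected integral linear relation among $a,b,c,u$ — is what rules out the degenerate sub-branches where two endpoints coincide and forces the induction to terminate cleanly rather than cycle). Finally, I would check that ``balanced'' is preserved throughout: transmissions and reductions do not create overlaps beyond what the balance condition allows, and the sum of lengths equals the new support length automatically since both sides of every identified pair are shifted by equal amounts.

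The main obstacle I anticipate is not any single step but the sheer branching: keeping the case analysis honest requires a disciplined normal form for the intermediate (non-symmetric) systems so that the three generalized iterations can be matched up across branches, and it requires showing that the ``defect'' invariant genuinely cannot exceed three before returning to zero — that is the one place where an a priori bound, rather than a case-by-case check, would be cleanest, and finding the right invariant (likely tied to the fact that a symmetric order-three system has exactly three degrees of freedom, so at most three independent ``corrections'' are needed) is where the real content lies.
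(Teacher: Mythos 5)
The setup you choose (coordinates $(a,b,c,u)$ for the special symmetric system, tracking each generalized iteration by a matrix in $GL(4,\mathbb Z)$) coincides with the paper's, but the mechanism you propose for why symmetry returns is where the argument breaks down. Your ``symmetry defect'' invariant --- a number in $\{0,1,2,3\}$ counting pairs whose endpoints are displaced from their symmetric positions --- does not capture the obstruction. What actually measures the deviation from symmetry is the triple of \emph{midpoints} of the three pairs (the system is symmetric iff these three real numbers coincide), and a one-sided step displaces them by \emph{different} real amounts depending on which interval is transmitted and which is reduced. Your claim that the displacement is ``uniform across the pairs that were only transmitted'' is false: for instance, in the configuration $u<b<a<a+b-u<u+c<\dots$ the first iteration shifts the $a$-midpoint by $u/2$, the $b$-midpoint by $(b+c)/2$, and leaves the $c$-midpoint fixed. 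Restoring symmetry therefore requires verifying a quantitative cancellation of these real-valued shifts --- e.g.\ that the total shift of the $c$-midpoint over $k-1$ reductions of the $c$-pair plus a final reduction of the $a$-pair equals the total shift of the $a$- and $b$-midpoints --- and no bounded integer invariant substitutes for that computation. Relatedly, the bound ``three'' is not explained by the three degrees of freedom: a single generalized iteration can comprise arbitrarily many ordinary iterations (on the order of $[\frac{b}{a-b}]$ or $[\frac{u}{b+c}]$ of them), and the count of three generalized iterations emerges only from the case analysis.

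Your estimate of ``essentially three generic types'' of combinatorial branch is also too optimistic: the paper needs eight cases, determined by the relative order of the critical points $u$, $u+c$, $b$, $b+c$, $a$, $a+b-u$, etc., which collapse to about four genuinely distinct induction schemes. In some of these, two ordinary iterations suffice; in others (when neither $a$-intervals contain $b$-intervals nor vice versa in a convenient way) the induction must alternate transmissions of the two $c$-intervals and a $b$-interval in a prescribed order for a number of steps depending on integer parts of ratios of the lengths, and the proof that the midpoints coincide at the end is a separate explicit computation in each such branch. So while your first and third paragraphs describe the right kind of bookkeeping, the proposal as written is missing the actual content of the proof: the enumeration of the eight configurations and the per-case verification that the midpoint shifts cancel.
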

\begin{proof} By changing notation if necessary we may assume that the system $S$ has a form~(\ref{100}) with $a>b>0$, $c>0$, $a+b-u>u>0$. Indeed,
if the inequality $a>b$ does not hold we simply exchange $a$ and $b$, and if we
have $a+b-u<u$, we replace $u$ by $a+b-u$, which does not change the system. We
call the intervals in $S$ of lengths $a$, $b$ and $c$ $a$-intervals, $b$-intervals and
$c$-intervals, respectively, and the corresponding pairs of intervals the $a$-pair, the $b$-pairs and the $c$-pairs respectively. We also call the point relative to which the intervals in the $a$-pair ($b$-pair, $c$-pair, respectively) are symmetric the $a$-midpoint ($b$-midpoint, $c$-midpoint, respectively). The interval identification system is symmetric iff these three points coincide. Note that the process stops if we obtain the system with a hole; so we assume that during all iterations described below a hole will not appear. We divide all generic interval
identification systems of this form (without a hole) into eight groups listed below
according to the relative position of the intervals.

\begin{description}
\item [Case 1:] $u<b-c$;
\item [Case 2:] $b-c<u<b$, $a>b+c$;
\item [Case 3:] $b-c<u<b$, $a<b+c$;
\item [Case 4:] $b<u<u+c$, $a<u+c$;
\item [Case 5:] $b<u<b+c$, $u+c<a+b-u$;
\item [Case 6:] $b<u<b+c$, $a+b-u<u+c<a$;
\item [Case 7:] $b+c<u$, $u+c<a+b-u$;
\item [Case 8:] $b+c<u$, $u+c<a$, $u+c>a+b-u$.
\end{description}
\smallskip
In Case 1 the abscense of hole implies $a>b+c$, and the order of the critical points in the support interval is:
$$u<u+c<b<b+c<a<a+c<a+b-u<a+b+c-u.$$
One can show that in this case two ordinary (not generalized) iterations of the Rauzy
induction result in a symmetric system. The first iteration consists of a transmission of $b$-interval along the $a$-pair 
and a reduction of $a$-pair on the right. The second iteration consists of a transmission of $c$-interval along the $a$ -pair and a reduction on the right of the rest of $a$-pairs. See an example in Figure \ref{2}, where $a$-intervals are represented by rectangular, $b$-intervals are represented by triangular and $c$-intervals by circular arcs.

\begin{figure}
\includegraphics[width=8cm,height=8cm]{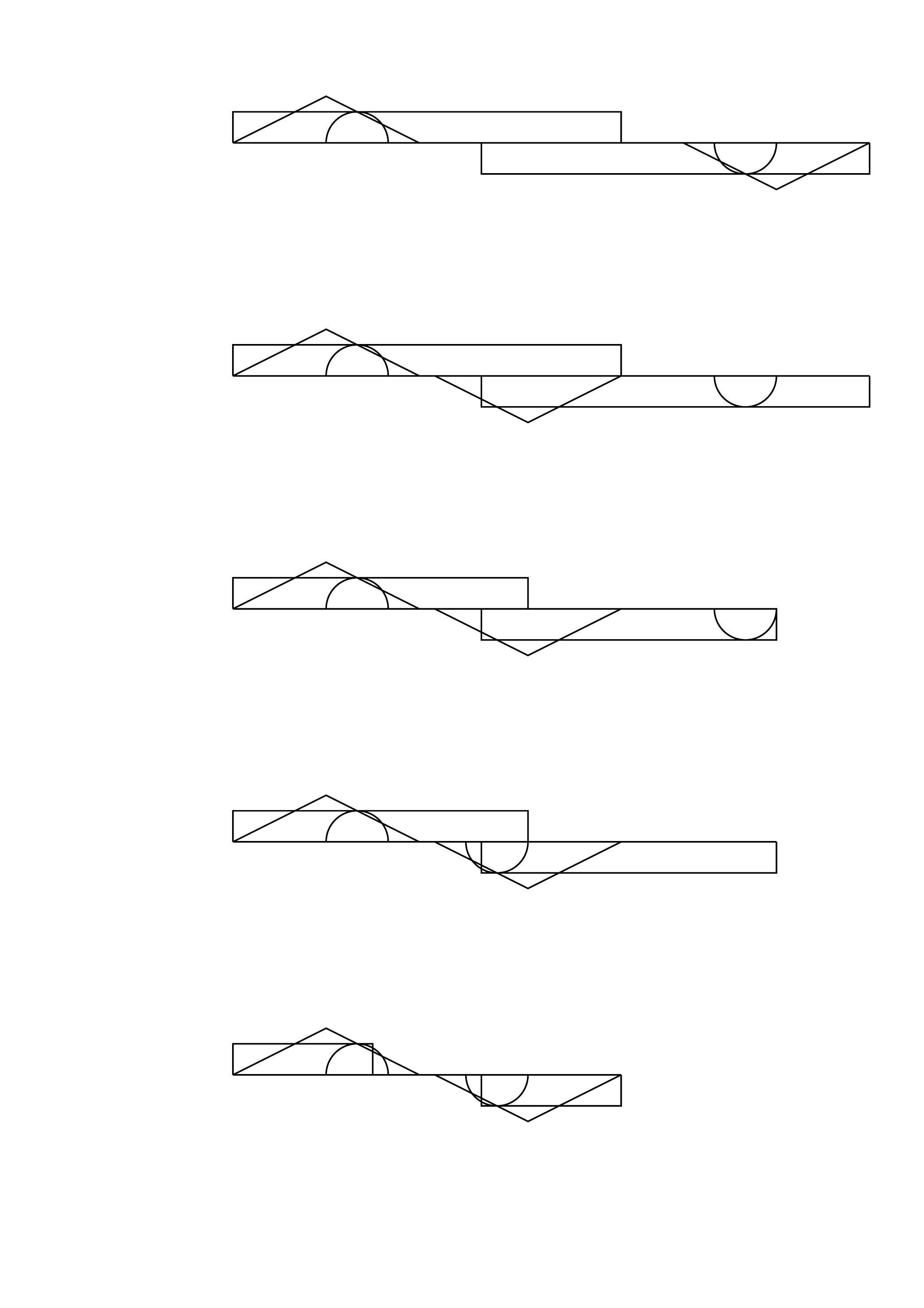}
\put(-170,196){$0$}
\put(-74,203){$a$}
\put(-28,203){$a+b+c$}
\put(-126,211){$b$}
\put(-150,197){$u$}
\put(-144,191){$u+c$}
\put(-132,196){\vector(0,10){8}}
\put(-125,210){\vector(0,-10){8}}
\put(-90,192){\vector(0,-10){20}}
\put(-90,150){\vector(0,-10){20}}
\put(-90,108){\vector(0,-10){20}}
\put(-90,66){\vector(0,-10){20}}
\caption{The Rauzy induction in Case 1}
\label{2}
\end{figure}

We obtain a new symmetric system (maybe with a hole) of the form~(\ref{100}) with new parameters $a',b',c',u'$,
where 

$\left(\begin{array}{c}
a'\\
b'\\
c'\\
u'\end{array}\right)$$=A$$\left(\begin{array}{c}
a\\
b\\
c\\
u\end{array}\right)$, and $A$ is one of the following matrices:

$\left(\begin{array}{cccc}
1 & -1 & -1 & 0\\
0 & 1 & 0 & 0\\
0 & 0 & 1 & 0\\
0 & 0 & 0 & 1\end{array}\right)$, $\left(\begin{array}{cccc}
1 & -1 & -1 & 0\\
0 & 1 & 0 & 0\\
0 & 0 & 1 & 0\\
1 & 0 & -1 & -1\end{array}\right)$,

$\left(\begin{array}{cccc}
0 & 1 & 0 & 0\\
1 & -1 & -1 & 0\\
0 & 0 & 1 & 0\\
0 & 0 & 0 & 1\end{array}\right)$, $\left(\begin{array}{cccc}
0 & 1 & 0 & 0\\
1 & -1 & -1 & 0\\
0 & 0 & 1 & 0\\
1 & 0 & -1 & -1\end{array}\right)$.

\noindent We choose the matrix for which the following inequalities hold: 
$$a'>b',  
a'+b'-u'>u'\eqno (2)$$
\medskip
\noindent In Case 2 the order of the critical points in the support interval is
$$u<b<u+c<b+c<a<a+b-u<a+c<a+b+c-u.$$
The scheme of the Rauzy induction is exactly the same as in the previous case. 

\medskip
\noindent
In Case 3 the abscense of a hole implies $a<2u+c-b$, and the order of the critical points in the support interval is:
$$u<b<a<a+b-u<u+c<b+c<a+c<a+b+c-u.$$
The scheme of Rauzy induction is as follows: the first iteration consists of a transmission of a $b$-interval and a reduction on the right of the $a$-pair; as a result the $a$-midpoint shifts by $\frac{u}{2}$ to the left, the $b$-midpoint shifts on $\frac{b+c}{2}$ to the left, the $c$-midpoint does not move. Denote ${[\frac{c+u-a}{a+b-2u}]}+1$ (where $[x]$ is the integer part of $x$) by $k$. The next $k-1$ iterations consists of a transmission of an $a$-interval on the right and a reduction of the $c$-intervals (one generalized iteration). During these iterations the $b$-midpoint does not move, while the $a$-midpoint and the $c$-midpoint shift by the same distance. The last generalized iteration consists of a transmission of a $c$-interval and a reduction of the $a$-pair. After that the rightmost point of the right interval in the $a$-pair coincides with the rightmost point of the right interval in the $b$-pair, therefore, the $a$-midpoint coincides with the $b$-midpoint. The last shift of the $a$-midpoint is $$\frac{(a+b+c-u-k(a+b-2u))}{2} -\frac{a}{2}$$ and the last shift of the $c$-midpoint is $$\frac{(a+b+c-k(a+b-2u))}{2}-\frac{a}{2},$$ so the total shifts of the $a$-midpoint and the $b$-midpoint in the mentioned operations are equal. Therefore, all three midpoints coincide after this procedure and we obtain a new symmetric system (maybe with a hole) of the form~(\ref{100}) with new parameters $a',b',c',u'$,
where

\selectlanguage{english}%
$\left(\begin{array}{c}
a'\\
b'\\
c'\\
u'\end{array}\right)$$=B$$\left(\begin{array}{c}
a\\
b\\
c\\
u\end{array}\right)$, and $B$ is one of the following matrices:

$\left(\begin{array}{cccc}
1+k & -1+k & -1 & -2k\\
0 & 1 & 0 & 0\\
-k & -k & 1 & 2k\\
0 & 0 & 0 & 1\end{array}\right)$, $\left(\begin{array}{cccc}
1+k & -1+k & -1 & -2k\\
0 & 1 & 0 & 0\\
-k & -k & 1 & 2k\\
1+k & k & -1 & -1-2k\end{array}\right)$,

$\left(\begin{array}{cccc}
0 & 1 & 0 & 0\\
1+k & -1+k & -1 & -2k\\
-k & -k & 1 & 2k\\
0 & 0 & 0 & 1\end{array}\right)$, $\left(\begin{array}{cccc}
0 & 1 & 0 & 0\\
1+k & -1+k & -1 & -2k\\
-k & -k & 1 & 2k\\
1+k & k & -1 & -1-2k\end{array}\right)$. 

\noindent
As in the previous cases we choose the matrix for which the inequalities (2) hold.
So the system becomes symmetric after three generalized iterations. See an example in Figure \ref{4}. 
\begin{figure}
\includegraphics[width=8cm,height=8cm]{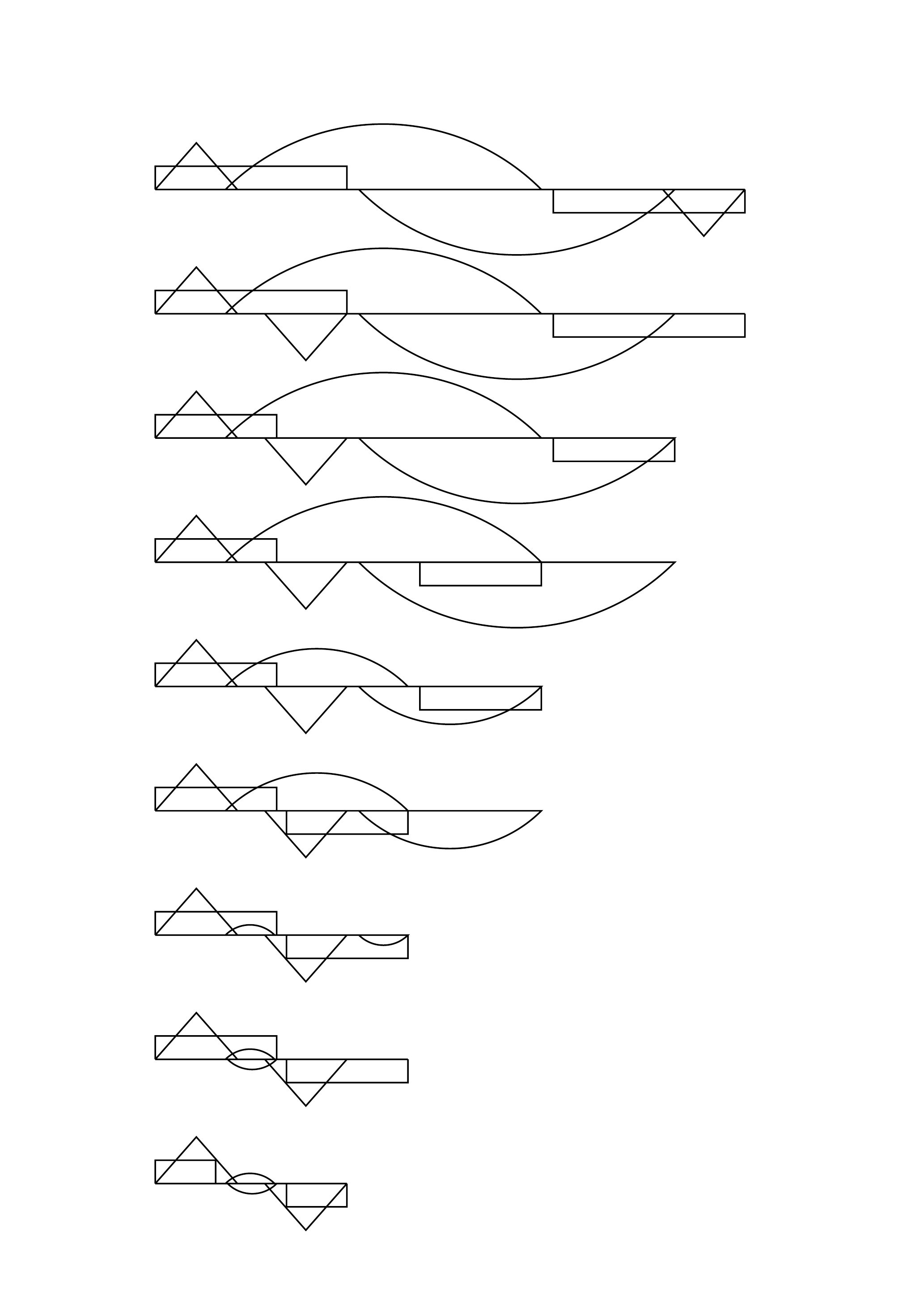}
\put(-160,188){\vector(0,-10){9}}
\put(-160,166){\vector(0,-10){9}}
\put(-160,144){\vector(0,-10){9}}
\put(-160,122){\vector(0,-10){9}}
\put(-160,100){\vector(0,-10){9}}
\put(-160,78){\vector(0,-10){9}}
\put(-160,56){\vector(0,-10){9}}
\put(-160,34){\vector(0,-10){9}}
\put(-139,195){$a$}
\put(-175,189){$u$}
\put(-171,203){$b$}
\put(-170,202){\vector(0,-10){8}}
\put(-97,204){$u+c$}
\put(-91,204){\vector(-1,-2){5}}
\put(-191,187){$0$}
\put(-70,195){$a+b+c$}
\caption{The Rauzy induction in Case 3}
\label{4}
\end{figure}

\medskip
\noindent
In Case 4 the order of the critical points in the support interval is:
$$b<u<a+b-u<a<u+c<a+b+c-u<a+c.$$
The scheme of the Rauzy induction is exactly the same as in the previous case. 

\medskip
\noindent
In Case 5 the abscense of a hole means, in particular, that $a>b+c$ and the order of critical points in the support interval is: 
$$b<u<b+c<u+c<a+b-u<a<a+b+c-u<a+c.$$
The Rauzy induction starts with a transmission of a $b$-interval along the $a$-pair and a reduction of the $a$-pair because $a+b+c-u$ becomes the rightmost point of the remaining support interval. The next iteration consists of a transmission of $c$-interval along the $a$-pair and then a reduction of the $a$-pair. 
An example is shown in Figure \ref{5}.
\selectlanguage{english} 
\begin{figure}
\includegraphics[width=10cm,height=10cm]{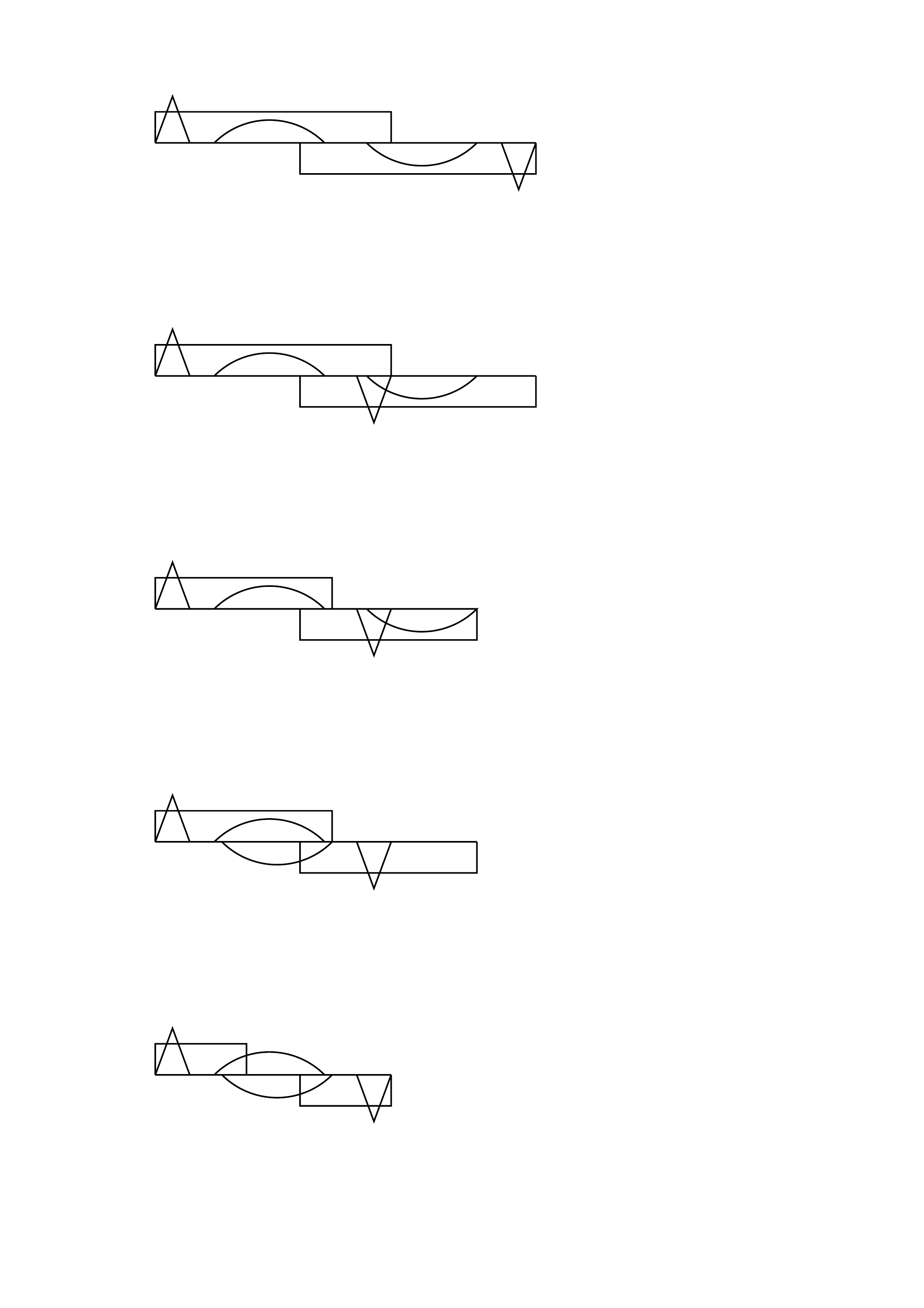}
\put(-175,243){\vector(0,-10){20}}
\put(-175,184){\vector(0,-10){20}}
\put(-175,130){\vector(0,-10){20}}
\put(-175,80){\vector(0,-10){20}}
\put(-162,255){$a$}
\put(-230,245){$b$}
\put(-220,245){$u$}
\put(-197,265){$u+c$}
\put(-186,264){\vector(0,-10){11}}
\put(-245,245){$0$}
\put(-120,255){$a+b+c$}
\caption{The Rauzy induction in Case 5}
\label{5}
\end{figure}

\noindent So, two ordinary iterations result in a new symmetric system (maybe with a hole) of the form~(\ref{100}) with new parameters $a',b',c',u'$, where  

$\left(\begin{array}{c}
a'\\
b'\\
c'\\
u'\end{array}\right)$$=A$$\left(\begin{array}{c}
a\\
b\\
c\\
u\end{array}\right)$, and $A$ is one of the following matrices:

$\left(\begin{array}{cccc}
1 & -1 & -1 & 0\\
0 & 1 & 0 & 0\\
0 & 0 & 1 & 0\\
0 & 0 & 0 & 1\end{array}\right)$, $\left(\begin{array}{cccc}
1 & -1 & -1 & 0\\
0 & 1 & 0 & 0\\
0 & 0 & 1 & 0\\
1 & 0 & -1 & -1\end{array}\right)$,

$\left(\begin{array}{cccc}
0 & 1 & 0 & 0\\
1 & -1 & -1 & 0\\
0 & 0 & 1 & 0\\
0 & 0 & 0 & 1\end{array}\right)$, $\left(\begin{array}{cccc}
0 & 1 & 0 & 0\\
1 & -1 & -1 & 0\\
0 & 0 & 1 & 0\\
1 & 0 & -1 & -1\end{array}\right)$.

\noindent We choose the matrix for which the inequalities (2) hold.

\medskip
\noindent
In Case 6 the order of the critical points in the support interval is:
$$b<u<b+c<a+b-u<u+c<a<a+b+c-u<a+c.$$
The scheme of the Rauzy induction is exactly the same as in the previous case.

\medskip
\noindent
In Case 7 the order of the critical points in the support interval is:
$$b<b+c<u<u+c<a+b-u<a+b+c-u<a<a+c.$$
In this case the first iteration of the Rauzy induction consists of a transmission of a $b$-interval and a reduction of the $a$-pair. Three midpoints move to the following positions: the $a$-midpoint and the $b$-midpoint to $\frac{a}{2}$ and the $c$-midpoint to $\frac{a+b+c}{2}$. There are two possibilities: $a-b<b$ or $a-b-c>b$ since in the case of $2b<a<2b+c$ we obtain a system with a hole.

If we have $a-b<b$ and, therefore, $a-b-c<b$, then each of $a$-intervals is located inside the corresponding $b$-interval. We will refer to the left $c$-interval as the $c_{1}$-interval and the right one the $c_{2}$-interval. The next $3n$ iterations, where $n=[\frac{b}{a-b}]$, consist of transmissions of $c$-intervals or $a$-interval to the right and reductions of the $b$-intervals. More precisely, there are $n$ blocks of iterations, in each of which the order of iterations is as follows:
\begin{itemize}
\item a transmission of the $a$-interval plus a reduction of the $b$-pair; 
\item a transmission of the $c_{1}$-interval plus a reduction of the $b$-pair; 
\item a transmission of the $c_{2}$-interval plus a reduction of the $b$-pair. 
\end{itemize}
Three midpoints shift by the same distance. As a result we obtain an interval identification system for which the rightmost point of the $c_{1}$-interval is $u+c-n(a-b)$,  the rightmost point of the $c_{2}$-interval is $a+b+c-u-n(a-b)$ and the rightmost point of the second interval in $a$-pair is $b-m(a-b)$, where $m$=$n-1$. One can check that now $b$-intervals are located inside $a$-intervals. The next two iterations consist of subsequent transmissions of $c$-intervals and reductions of $a$-intervals. The next iteration consists of a transmission of a $b$-interval and a reduction of the $a$-pair. Three midpoints get to the following positions: the $a$-midpoint coincide with the $b$-midpoint (because the leftmost ends of left intervals in corresponding pairs coincide and so do the rightmost points of the right intervals), the $a$-midpoint shifts in total by $b+c$ and the $c$-midpoint shifts by one half of $$-2n(a-b)+b+c+((a-u)-(a+b+c-u-n(a-b)))+(u-b-(u+c-n(a-b)))=2(b+c).$$  An example is shown in Figure \ref{711}.
\selectlanguage{english}
\begin{figure}
\includegraphics[width=12cm,height=12cm]{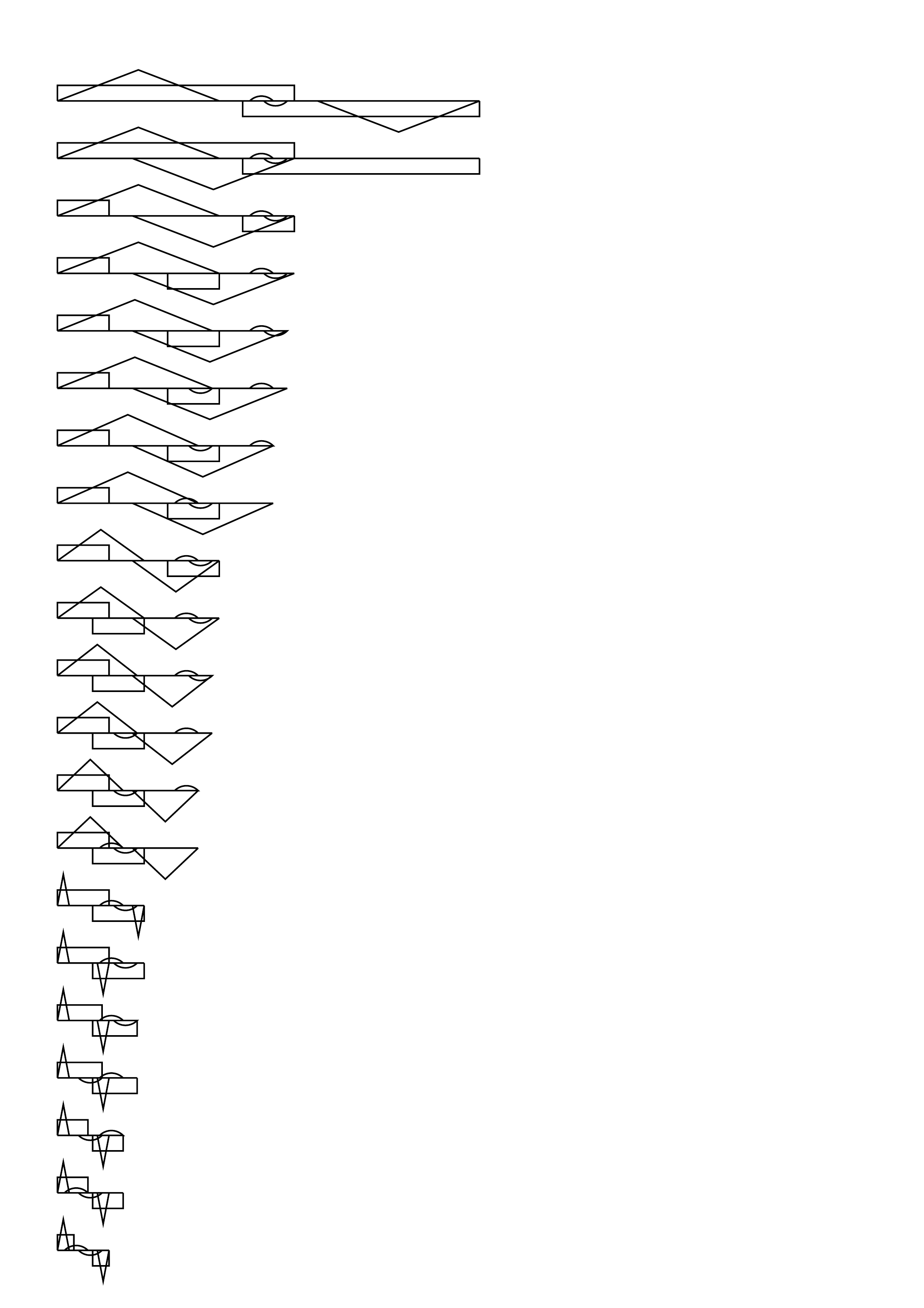}
\put(-300,315){\vector(0,-10){8}}
\put(-300,300){\vector(0,-10){8}}
\put(-300,285){\vector(0,-10){8}}
\put(-300,270){\vector(0,-10){8}}
\put(-300,255){\vector(0,-10){8}}
\put(-300,240){\vector(0,-10){8}}
\put(-300,225){\vector(0,-10){8}}
\put(-300,210){\vector(0,-10){8}}
\put(-300,195){\vector(0,-10){8}}
\put(-300,180){\vector(0,-10){8}}
\put(-300,162){\vector(0,-10){8}}
\put(-300,147){\vector(0,-10){8}}
\put(-300,130){\vector(0,-10){8}}
\put(-300,115){\vector(0,-10){8}}
\put(-300,100){\vector(0,-10){8}}
\put(-300,85){\vector(0,-10){8}}
\put(-300,70){\vector(0,-10){8}}
\put(-300,55){\vector(0,-10){8}}
\put(-300,40){\vector(0,-10){8}}
\put(-300,25){\vector(0,-10){8}}
\put(-235,320){$a$}
\put(-270,310){$b$}
\put(-253,326){\vector(1,-2){6}}
\put(-255,327){$u$}
\put(-327,310){$0$}
\put(-162,310){$a+b+c$}
\caption{The Rauzy induction in Case 7: the first example}
\label{711}
\end{figure}

One can check that these iterations result in interval identification system (maybe with a hole) of the form~(\ref{100}) with new parameters $a',b',c',u'$,
where

$\left(\begin{array}{c}
a'\\
b'\\
c'\\
u'\end{array}\right)$$=C$$\left(\begin{array}{c}
a\\
b\\
c\\
u\end{array}\right)$, and $C$ is the matrix from the list:

$\left(\begin{array}{cccc}
1+n & -n-2 & -2 & 0\\
-n & 1+n & 0 & 0\\
0 & 0 & 1 & 0\\
0 & -1 & 0 & 1\end{array}\right)$, 

$\left(\begin{array}{cccc}
-n & 1+n& 0 & 0\\
1+n & -n-2 & 0 & 0\\
0 & 0 & 1 & 0\\
0 & -1 & 0 & 1\end{array}\right)$,
with $\mathit{n=[\frac{b}{a-b}]}$,
for which the inequalities (2) hold.

If $a-b-c>b$ holds, $b$-intervals are located inside $a$-intervals. The next $x=[\frac{u}{b+c}]$ iterations consist of transmissions of the $b$-interval and reductions of the $a$-pair.  Next $2(y-x)$ iterations, where $y=[\frac{a-u-c}{b+c}]$, consist of transmissions of a $b$-interval and transmissions of the $c_{2}$-interval, each followed by a reduction of the $a$-pair, in the alternating order. After all these iterations the rightmost point of the right $b$-interval is $a-y(b+c)$, the rightmost points of $c$-intervals are still $u+c$ and $a+b+c-u-(y-x)(b+c)$. In each transmission of a $b$-interval the $b$-midpoint shifts by $\frac{b+c}{2}$ to the left; in each transmission of the $c_{2}$-interval the $c$-midpoint shifts by the same distance. The next $3x-1$ (if $a-y(b+c)>a+b+c-u-(y-x)(b+c)$) or $3x-2$ (in the other case) iterations consist of transmissions of the $c_{1}$-interval, transmissions of the $c_{2}$-interval, and transmissions of a $b$-interval, each followed by a reduction of the $a$-pair, in the order shown below:
$$
\underbrace{c_{1},c_{2}, b, c_{1}, c_{2},b,\ldots,c_{1},c_{2}}_{\mbox{$3x-1$}}
$$
or
$$
\underbrace{c_{1}, b, c_{2}, c_{1}, b, c_{2},\ldots,c_{1}}_{\mbox{$3x-2$}}.
$$
In each transmission of the $b$-interval the $b$-midpoint shifts by $$\frac{b+c}{2}$$ to the left; in each transmission of the $c_{2}$-interval followed by transmission of $c_{1}$-interval the $c$-midpoint shifts by the $b+c$. So, after all described iterations the rightmost point of the second interval in the $a$-pair coincides with the rightmost point of the second interval in the $b$-pair, therefore, the $a$-midpoint coincides with the $b$-midpoint. The $b$-midpoint shifts in total by $$\frac{(x+y-1)(b+c)}{2}$$ and the $c$-midpoint shifts by $$\frac{(y-x-1+2x)(b+c)}{2}.$$ An example of this situation is shown in Figure 6.
\begin{figure}
\includegraphics[width=12cm,height=17cm]{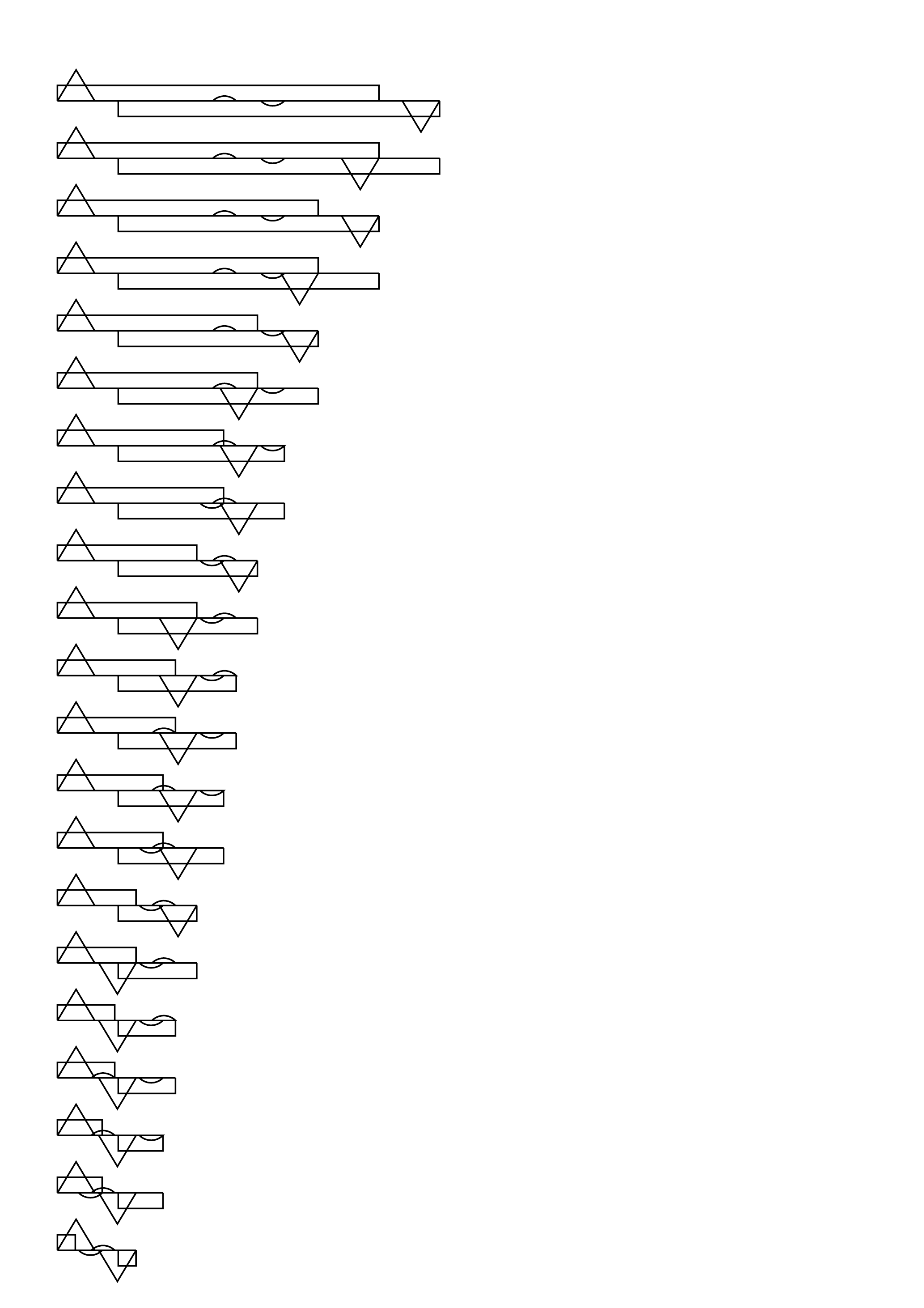} 
\put(-300,440){\vector(0,-10){7}}
\put(-320,441){$0$}
\put(-310,441){$b$}
\put(-265,454){$u$}
\put(-264,454){\vector(0,-1){9}}
\put(-255,454){\vector(0,-1){9}}
\put(-255,454){$u+c$}
\put(-208,441){$a$}
\label{7}
\put(-300,420){\vector(0,-10){7}}
\put(-300,395){\vector(0,-10){7}}
\put(-300,374){\vector(0,-10){7}}
\put(-300,355){\vector(0,-10){7}}
\put(-300,335){\vector(0,-10){7}}
\put(-300,313){\vector(0,-10){7}}
\put(-300,290){\vector(0,-10){7}}
\put(-300,269){\vector(0,-10){7}}
\put(-300,249){\vector(0,-10){7}}
\put(-300,227){\vector(0,-10){7}}
\put(-300,207){\vector(0,-10){7}}
\put(-300,187){\vector(0,-10){7}}
\put(-300,164){\vector(0,-10){7}}
\put(-300,144){\vector(0,-10){7}}
\put(-300,122){\vector(0,-10){7}}
\put(-300,98){\vector(0,-10){7}}
\put(-300,77){\vector(0,-10){7}}
\put(-300,55){\vector(0,-10){7}}
\put(-300,33){\vector(0,-10){7}}
\caption{The Rauzy induction in Case 7: the second example}
\end{figure}

Therefore all three midpoints coincide after this procedure and the process results either in a system with a hole or in the system of the form~(\ref{100}) with new parameters $a',b',c',u'$,
where $\left(\begin{array}{c}
a'\\
b'\\
c'\\
u'\end{array}\right)$$=C$$\left(\begin{array}{c}
a\\
b\\
c\\
u\end{array}\right)$, and $C$ is the matrix from the list:

$\left(\begin{array}{cccc}
1 & -(x+y)-1 & -(x+y)-1 & 0\\
0 & 1 & 0 & 0\\
0 & 0 & 1 & 0\\
1 & -y & -y-1 & -1\end{array}\right)$, 

$\left(\begin{array}{cccc}
0 & 1 & 0 & 0\\
1 & -(x+y)-1 & -(x+y)-1 & 0\\
0 & 0 & 1 & 0\\
1 & -y & -y-1 & -1\end{array}\right)$,
with 
$\mathit{x=[\frac{u}{b+c}],y=[\frac{a-u-c}{b+c}]}$, for which the inequalities (2) hold.

\medskip
\noindent
In Case 8 the order of critical points in the support interval is:
$$b<b+c<u<a+b-u<u+c<a<a+b+c-u<a+c.$$
The scheme of the Rauzy induction is exactly the same as in the previous case. 
\end{proof}

Now we construct a symmetric interval identification system of thin
type. By the latter we mean an interval identification system for which an equivalent system may have arbitrarily small support (in \cite{8} such interval translation mappings are called ITM of infinite type). As described in \cite{1} such systems can be used to construct 3-periodic central symmetric surfaces in the three space whose intersections with plane of fixed direction has a chaotic behavior. The construction is motivated by Novikov's problem on conductivity of normal metals \cite{5}. Thin case in the theory of {$\mathbb R$}-trees was discovered by
G. Levitt in \cite{4}. A concrete example of a  translation map of thin case was provided by M. Boshernitzan and I. Kornfeld in \cite{8} and H. Bruin and S. Troubetzkoy proved in \cite{9} that thin interval translation maps form a set of zero measure. A generic example of order $3$ interval identification system of thin type is given by Dynnikov in \cite{1}. A construction equivalent to this example was described in different terms in \cite{6}. A concrete example of thin band complexes is given by M. Bestvina and M. Feighn in \cite{3}. None of these examples was symmetric.

Denote by $M$ the following matrix: 

$\left(\begin{array}{cccc}
3 & 1 & -1 & -4\\
-1 & 2 & 0 & 0\\
-2 & -2 & 1 & 4\\
3 & 2 & -1 & -5\end{array}\right)$

\selectlanguage{english}%
\inputencoding{latin9}\noindent It is easy to see that this matrix has exactly
one real positive eigenvalue $\lambda<1$. Its approximate value
is $\lambda\approx0.254$. 

\newtheorem{proposition}{Proposition}
\begin{proposition}
Let $\left(a,b,c,u\right)$ be an eigenvector
of the matrix $M$ with the eigenvalue $\lambda$ and positive coordinates.
Then the corresponding symmetric interval identification system 
\begin{equation*}
\begin{split}
S=(\left[0,a+b+c\right];& \left[0,a\right]\leftrightarrow\left[b+c,a+b+c\right],\\ 
					& \left[0,b\right]\leftrightarrow\left[a+c,a+b+c\right], \\
					& \left[u,u+c\right]\leftrightarrow\left[a+b-u,a+b+c-u\right])
\end{split}
\end{equation*}
is of thin type. Approximate values of $\left(a,b,c,u\right)$ normalized by $a+b+c=1$ are equal to $\left(0.444,0.254,0.302,0.292\right)$. 
\end{proposition}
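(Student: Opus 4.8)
The plan is to exploit self-similarity of $S$ under the Rauzy induction: I would show that a suitable finite sequence of generalized iterations of the one-sided Rauzy induction carries $S$ to its homothetic copy $\lambda S$ (the image of $S$ under the rescaling $x\mapsto\lambda x$), with $M$ being the matrix that describes the induced transformation of the parameter vector $(a,b,c,u)$. Once this is established, thinness follows at once, since the rescaled systems have arbitrarily small support.

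To begin, I would check that $S$ really is a special symmetric balanced interval identification system of order three without a hole: being symmetric, special and balanced is read off directly from the displayed form of $S$, and the absence of a hole is verified from the numerical values of $(a,b,c,u)$. One also notes that $(a,b,c,u)\approx(0.444,0.254,0.302,0.292)$ already satisfies $a>b>0$, $c>0$ and $a+b-u>u>0$, so $S$ is in the normalized form used in the proof of Theorem~1 and no initial renumeration is needed.

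The heart of the argument is to run the Rauzy induction on $S$ explicitly. Using the numerical values of $(a,b,c,u)$, one determines at each stage which of the eight cases of Theorem~1 the current system belongs to, applies the prescribed generalized iteration(s), and records the corresponding matrix from the lists in the proof of Theorem~1 (together with the values of the auxiliary integers $k$, $n$, $x$, $y$ where they occur). Because the system returns to a symmetric one after at most three generalized iterations and the orbit is self-similar, the sequence of cases visited and the values of these integers repeat periodically, so the total action on $(a,b,c,u)$ is the product of a fixed finite list of those matrices. I would verify that this product equals $M$ and that at every intermediate step the strict absence-of-a-hole inequalities quoted in Theorem~1 hold — they do, being strict numerical inequalities, and they are preserved under multiplication by $\lambda>0$ — so that the induction never stops and never leaves the generic stratum. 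Since $(a,b,c,u)$ is an eigenvector of $M$ for the eigenvalue $\lambda$, the system produced has parameter vector $\lambda(a,b,c,u)$, that is, it is exactly $\lambda S$.

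Finally, put $S_{0}=S$ and let $S_{n+1}$ be the result of applying the same fixed sequence of generalized iterations to $S_{n}$. Then $S_{n}=\lambda^{n}S$, and its support $[0,\lambda^{n}(a+b+c)]$ has length tending to $0$ as $n\to\infty$. Since transmissions and reductions turn a system into an equivalent one and equivalence is transitive, every $S_{n}$ is equivalent to $S$; hence $S$ has equivalent systems of arbitrarily small support and is of thin type. The main obstacle is the combinatorial bookkeeping of the central step: pinning down the exact sequence of cases visited, checking that a hole is never created (which is precisely what keeps the induction going forever), and confirming that the product of the relevant matrices from Theorem~1 is indeed $M$. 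A minor additional point is genericity: since $\lambda$ generates a degree-four extension of $\mathbb{Q}$, the coordinates of the eigenvector satisfy no integer linear relation beyond $a+b+c=B-A$, so the hypotheses of Theorem~1 are met along the entire orbit.
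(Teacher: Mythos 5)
Your proposal is correct and follows essentially the same route as the paper: both establish self-similarity by running the one-sided Rauzy induction on $S$ explicitly, identifying the sequence of cases from Theorem~1 that the system passes through (the paper pins this down as Case~4 with $k=2$, then Case~2, then back to Case~4, six iterations in all), so that the composite action on $(a,b,c,u)$ is given by $M$ and the eigenvector property yields the rescaled copy $\lambda S$, whence arbitrarily small supports and thinness. Your additional remarks on verifying the absence of holes at each step and on genericity of the eigenvector coordinates are sensible bookkeeping that the paper leaves implicit.
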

\begin{proof}
It is easy to see that after 6 iterations of the
right-side Rauzy induction the resulting interval identification
system is a scaled down version of the original one multiplied by $\lambda$: one can check that given values of parameters determine a system
that corresponds to Case 4 from the previous proposition with $k=2$;
after an iteration of the Rauzy induction that corresponds to this case we obtain a symmetric
system related to Case 2 from the theorem. After the next two ordinary iterations we return
to Case 4. The Rauzy induction for our example is shown in Figure \ref{thin}.
\begin{figure}
\includegraphics[width=10cm,height=10cm]{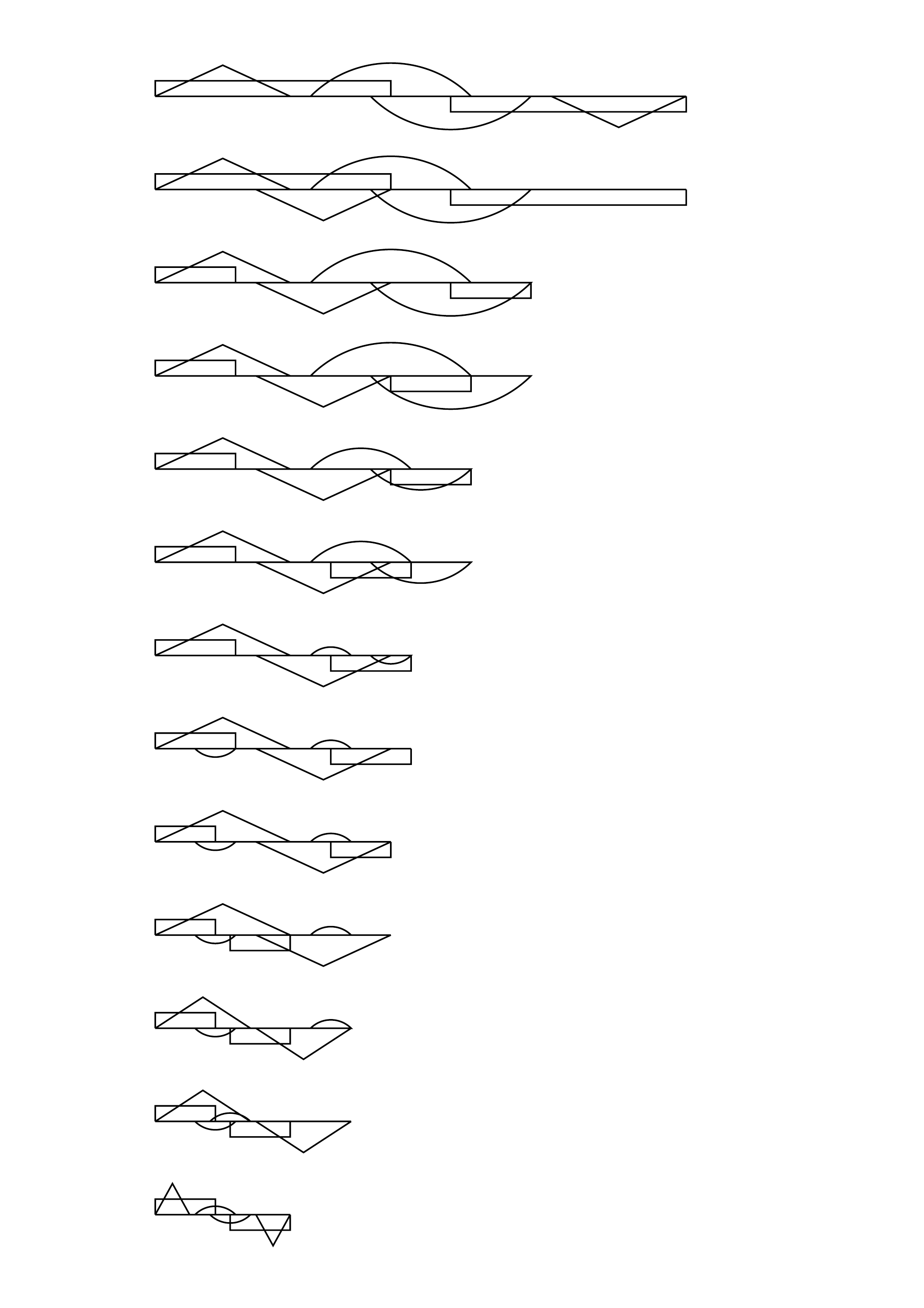}
\put(-210,260){\vector(0,-10){9}}
\put(-210,240){\vector(0,-10){9}}
\put(-210,220){\vector(0,-10){9}}
\put(-210,200){\vector(0,-10){9}}
\put(-210,178){\vector(0,-10){9}}
\put(-210,156){\vector(0,-10){9}}
\put(-210,136){\vector(0,-10){9}}
\put(-210,116){\vector(0,-10){9}}
\put(-210,96){\vector(0,-10){9}}
\put(-210,76){\vector(0,-10){9}}
\put(-210,56){\vector(0,-10){9}}
\put(-210,36){\vector(0,-10){9}}
\put(-201,256){$b$}
\put(-190,256){$u$}
\put(-160,275){$a$}
\put(-159,275){\vector(-1,-2){6}}
\put(-135,275){\vector(-1,-2){6}}
\put(-142,275){$u+c$}
\put(-239,255){$0$}
\put(-87,265){$a+b+c$}
\caption{The Rauzy induction in symmetric thin case example}
\label{thin}
\end{figure}

So, by applying sufficiently many steps of the Rauzy induction we can get an interval identification system with arbitrarily small support.
\end{proof}

\emph{Acknowledgements.} I wish to thank I. Dynnikov for posing the problem and for constant attention to this work and T. Coulbois for useful remarks.

\newpage


\begin{thebibliography}{4}
\bibitem[1]{1}Ivan Dynnikov, \emph {Interval Identification Systems and
Plane Sections of 3-Periodic Surface}, Proceedings of the Steklov Institute
of Mathematics, 2008, Vol. 263, 65-77

\bibitem[2]{2}Ivan Dynnikov, Bert Wiest, \emph {On The Complexity of Braids},
J. Eur. Math. Soc. 9 , 2007, no. 4, 801-840

\bibitem[3]{3}Mladen Bestvina, Mark Feighn, \emph {Stable Actions of Groups
On Real Trees}, Invent. math. 121, 1995, 287-321

\bibitem[4]{4}Gilbert Levitt, \emph {La dynamique des pseudogroupes de rotations},
Invent. Math., Vol. 113, no. 3, 1993, 633 -670

\bibitem[5]{5}S.P. Novikov, \emph {The Hamiltonian Formalism and Many-Valued Analogue of Morse Theory}, Usp. Mat. Nauk 37, no 5, 1982, 3-49 

\bibitem[6]{6}Ivan Dynnikov, \emph {Semiclassical Motion of the Electron. A Proof of the Novikov Conjecture in General Position and Counterexamples}, Solitons, Geometry and Topology: on the Crossroad (AMS Transl., Ser. 2 179, 1997, 45-73)

\bibitem[7]{7}Mladen Bestvina, \emph {{$\mathbb R$}-trees in topology, geometry, and group theory}, in "Handbook of geometric topology", North-Holland, Amsterdam, 2002, 55-91 

\bibitem[8]{8}Michael Boshernitzan, Isaac Kornfeld, \emph {Interval Translation Mappings}, Ergodic Theory and Dynamical Systems 15, 1995, 821-832

\bibitem[9]{9}Henk Bruin, Serge Troubetzkoy, \emph {The Gauss Map on a class of interval identification mappings}, Israel J. Math, no. 137, 2003,  125-148

\bibitem[10]{10}Damien Gaboriau, \emph {Dynamique des systemes d'isometries: sur les bouts des orbits}, Invent. Math., vol. 126, no 2, 1996, 297-318

\end{thebibliography}
\end{document}